\newcommand{\Q}{\mathbb{Q}}
\newcommand{\K}{\mathbb{K}}
\renewcommand{\L}{\mathbb{L}}
\newcommand{\PP}{\mathfrak{p}}
\newcommand{\Gal}{\operatorname{Gal}}
\newcommand{\imod}[1]{\,\left(\textnormal{mod }#1\right)\,}
\newcommand{\red}[1]{{\color{red} #1}}
\newtheorem{theorem}{Theorem}[section]
\newtheorem{corollary}[theorem]{Corollary}
\newtheorem{lemma}[theorem]{Lemma}
\theoremstyle{definition}
\newtheorem*{remark}{Remark}
\title{On the constants in Mertens' theorems for primes in arithmetic progressions}
\author[D.~Keliher]{Daniel~Keliher}
\address{University of Georgia, Department of Mathematics, Athens, GA, USA} 
\email{keliher@uga.edu}
\author[E.~S.~Lee]{Ethan~Simpson~Lee}
\address{University of Bristol, School of Mathematics, Fry Building, Woodland Road, Bristol, BS8 1UG} 
\email{ethan.lee@bristol.ac.uk}
\urladdr{\url{https://sites.google.com/view/ethansleemath/home}}
\begin{document}

\maketitle

\begin{abstract}
A 1976 result from Norton may be used to give an asymptotic (but not explicit) description of the constant in Mertens' second theorem for primes in arithmetic progressions. Assuming the Generalized Riemann Hypothesis, we give an effective description of Norton's observation.
\end{abstract}

\section{Introduction}

Suppose that $p$ denotes prime numbers, $B = 0.2614\ldots$ is the Meissel--Mertens constant, and $\gamma = 0.5772\ldots$ is the Euler--Mascheroni constant. Mertens famously proved the following three results \cite{mertens1874ein}, which are collectively called Mertens' theorems:
\begin{align}
    \sum_{p\leq x} \frac{\log{p}}{p} &= \log{x} + O(1), \label{eqn:M1}\\
    \sum_{p \leq x} \frac{1}{p} &= \log\log{x} + B + O\bigg(\frac{1}{\log x} \bigg), \label{eqn:M2}\\
    \prod_{p\leq x} \bigg(1 - \frac{1}{p} \bigg) &= \frac{e^{-\gamma}}{\log x} \big(1+o(1) \big). \label{eqn:product_formula}
\end{align}
Mertens' theorems have been generalized into several settings. For example, Williams \cite{Williams} generalized them for primes in arithmetic progressions, Rosen \cite{rosen1999generalization} generalized them for the number field setting, Arango-Pi\~{n}eros, Keyes, and the first author \cite{M4CS} generalized them for prime ideals in a conjugacy class of the Galois group of a Galois extension of number fields, Yashiro \cite{M4SC} proved them for the Selberg class, and Lebacque \cite{Lebacque} generalized them for smooth, absolutely irreducible, projective algebraic varieties. Hasegawa, Saito, and Sato \cite{MertensGraphTheoryGen_Abelian, MertensGraphTheoryGen_Galois} also generalized Williams' result for graph covers.


Any analogue of \eqref{eqn:M2} will involve a constant that can be computed exactly when the setting is fixed. However, when the setting can vary, the constant could fall anywhere inside a range, where the upper and lower bounds will depend upon certain invariants in that setting. For example, suppose that $\K$ is a number field of degree $n_{\K} \geq 2$ with discriminant $\Delta_{\K}$, $\PP$ are the prime ideals of $\K$, $N(\PP)$ is the norm of $\PP$, and $\kappa_{\K}$ is the residue at $s=1$ of the Dedekind zeta-function $\zeta_{\K}(s)$ associated to $\K$. Garcia and the second author proved in \cite{GarciaLeeRamanujan} that
\begin{equation*}
    \sum_{N(\PP) \leq x} \frac{1}{N(\PP)} = \log\log{x} + M_{\K} + O\bigg(\frac{1}{\log x} \bigg) ,
\end{equation*}
in which the implied constant in the big-$O$ depends only on $n_{\K}$ and $\Delta_{\K}$, and
\begin{equation*}
    \gamma + \log{\kappa_{\K}} - n_{\K} 
    \leq M_{\K} 
    \leq \gamma + \log{\kappa_{\K}} .
\end{equation*}


In the present paper, we focus on generalisations of Mertens' theorems for primes in arithmetic progressions. These have received a lot of recent attention. First, Williams proved (in \cite[Theorem 1]{Williams}) that if $q$ and $\ell$ are integers such that $1\leq\ell\leq q$ and $(\ell,q) = 1$, then there exists a constant $C(q,\ell)$ such that
\begin{equation}\label{eqn:MertensThirdPrimesAP}
    \prod_{\substack{p\leq x\\ p\,\equiv\,\ell\imod{q}}} \left(1 - \frac{1}{p}\right)
    = \frac{C(q,\ell)}{(\log{x})^{1/\varphi(q)}} + O\!\left(\frac{1}{(\log{x})^{1+1/\varphi(q)}}\right) ,
\end{equation}
where the implied constant depends on $q$, and $\varphi$ denotes the Euler totient function; this generalizes \eqref{eqn:product_formula} for primes in arithmetic progressions. Williams did give a description of the constant $C(q,\ell)$, but his definition depends on a Dirichlet series $K(s,\chi)$ that is somewhat cumbersome to define. In \cite{LanguascoZaccagnini}, Languasco and Zaccagnini gave the following alternate description:
\begin{equation}\label{eq:C_LZ}
\begin{split}
    C(q,\ell)^{\varphi(q)} 
    &= e^{-\gamma} \prod_p \left(1 - \frac{1}{p}\right)^{\alpha(p;q,\ell)} , \\
    &\qquad\text{where}\quad
    \alpha(p;q,\ell) = 
    \begin{cases}
        \varphi(q) - 1 &\text{if $p\equiv \ell\imod{q}$,}\\
        - 1 &\text{if $p\not\equiv \ell\imod{q}$.}
    \end{cases}
\end{split}
\end{equation}
In \cite{LanguascoZaccagniniNumerical, LZ_identities}, Languasco and Zaccagnini also demonstrated how to compute $C(q,\ell)$ with high accuracy for a broad range of $q$ and $\ell$  and gave a simpler proof of \eqref{eq:C_LZ}.  Furthermore, in \cite{LZ2008}, Languasco and Zaccagnini studied average values of $C(q, \ell)$, wherein they established analogues of theorems from Bombieri--Vinogradov and Barban--Davenport--Halberstam.

Next, one can use \eqref{eqn:MertensThirdPrimesAP} to generalize \eqref{eqn:M2} for primes in arithmetic progressions. That is, if $q$ and $\ell$ are integers such that $1\leq\ell\leq q$ and $(\ell,q) = 1$, then
\begin{equation}\label{eqn:MertensSecondAPs}
    \sum_{\substack{p\leq x\\ p\,\equiv\,\ell\imod{q}}} \frac{1}{p}
    = \frac{\log\log{x}}{\varphi(q)} + M(q,\ell) + O\!\left(\frac{1}{\log{x}}\right),
\end{equation}
where the implied constant depends on $q$ and $M(q,\ell) = - \log{C(q,\ell)} + B(q,\ell)$ such that
\begin{equation*}
    B(q,\ell) = \sum_{p\,\equiv\,\ell\imod{q}} \left\{\frac{1}{p} + \log\left(1 - \frac{1}{p}\right)\right\} .
\end{equation*}
Once \eqref{eqn:MertensSecondAPs} is known, it is straightforward to generalize \eqref{eqn:M1} into the context of primes in arithmetic progressions using partial summation.

Pomerance proved that $M(q,\ell)$ is bounded as $q$ and $\ell$ vary in \cite{Pomerance}. Languasco and Zaccagnini also explored the constant $M(q,\ell)$ in \cite{LanguascoZaccComps}. As part of this work, they used their extensive data to conjecture that $M(q,\ell) \sim \delta_{\ell}/\ell$, where $\delta_{\ell} = 1$ if $\ell$ is prime and $\delta_{\ell} = 0$ otherwise. It turned out that Norton had already proved a lemma in 1976 (see \cite[Lemma 6.3]{Norton}), which is a more general version of this relationship, so his contribution was included as an appendix to Languasco and Zaccagnini's paper. In particular, a special case of Norton's lemma tells us that
\begin{equation}\label{eqn:Norton}
    M(q,\ell) = \frac{\delta_{\ell}}{\ell} + O\!\left(\frac{\log{q}}{\varphi(q)}\right) ,
\end{equation}
where the implied constant in \eqref{eqn:Norton} was not specified or described. Note that we will see $\varphi(q) \geq \sqrt{q}$ for $q > 3$ such that $q\neq 6$ in Lemma \ref{lem:varphi_lower_bounds}. It follows that \eqref{eqn:Norton} can be re-written for these $q$ as
\begin{equation}\label{eqn:Norton_rewritten}
    M(q,\ell) = \frac{\delta_{\ell}}{\ell} + O\!\left(\frac{\log{q}}{\sqrt{q}}\right) ,
\end{equation}
although the implied constant in \eqref{eqn:Norton_rewritten} remains unspecified. 

Our objective in this paper is to describe the implied constant in \eqref{eqn:Norton_rewritten}, assuming the Generalized Riemann Hypothesis (GRH) is true. We need to assume the GRH because the unconditional knowledge we have on the distribution of primes in arithmetic progressions is not good enough to match the asymptotic order of the big-$O$ in \eqref{eqn:Norton_rewritten} without assuming it. The following result is an important milestone toward satisfying the aforementioned objective.



\begin{theorem}\label{thm:explicit_fmla_constant}
If $q\geq 3$ and the GRH is true, then
\begin{equation*}
    \left| M(q,\ell) - \frac{\delta_{\ell}}{\ell} + \frac{\log\log{q}}{\varphi(q)}\right|
    < \tau_1(q) \frac{\varphi(q)^{-1}}{q-1} + \tau_2(q) \frac{\log{q}}{\sqrt{q}} ,
\end{equation*}
in which 
\begin{align*}
    \tau_1(q) &= \red{B + \frac{1}{2 \log{q}} + \frac{1}{(\log{q})^2}} 
    \quad\text{and}\\
    \tau_2(q) &= \frac{15 + \frac{84.89}{q^{2/3}}}{8\pi} + \frac{6 + \frac{15}{4\pi} + \frac{3}{4\pi q^{2/3}}}{\log{q}} + \frac{22.29 + q^{- 2/3}}{(\log{q})^2} + \frac{20.58}{(\log{q})^3} .
\end{align*}
\end{theorem}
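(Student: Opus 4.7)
The strategy I would adopt combines character orthogonality with a Mertens-type identity at $x = q$ and GRH-based tail bounds. Starting from the standard character expansion of $M(q,\ell)$, obtained by orthogonality in \eqref{eqn:MertensSecondAPs} and letting $x \to \infty$,
\[
\varphi(q)\,M(q,\ell) \;=\; B - \sum_{p \mid q}\frac{1}{p} + \sum_{\chi \neq \chi_0}\bar{\chi}(\ell)\sum_{p}\frac{\chi(p)}{p},
\]
and combining it with the exact orthogonality identity at $x = q$,
\[
\frac{\delta_\ell}{\ell} \;=\; \sum_{p \leq q,\,p \equiv \ell\,(q)} \frac{1}{p} \;=\; \frac{1}{\varphi(q)} \sum_\chi \bar{\chi}(\ell) \sum_{p \leq q,\,p \nmid q} \frac{\chi(p)}{p},
\]
one can eliminate $\sum_{p \mid q} 1/p$. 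Writing $\epsilon(q) := \sum_{p \leq q} 1/p - B - \log\log q$ for the Mertens error at $q$, the outcome is the clean identity
\[
M(q,\ell) - \frac{\delta_\ell}{\ell} + \frac{\log\log q}{\varphi(q)} \;=\; -\frac{\epsilon(q)}{\varphi(q)} + \frac{1}{\varphi(q)}\sum_{\chi \neq \chi_0} \bar{\chi}(\ell) \sum_{p > q} \frac{\chi(p)}{p},
\]
which reduces the theorem to two explicit GRH-based estimates.

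For the Mertens error, I would use Abel summation to write $\epsilon(q) = (\pi(q) - \li(q))/q - \int_q^\infty (\pi(y) - \li(y))\,y^{-2}\,dy$, and then invoke Schoenfeld's explicit RH bound $|\pi(y) - \li(y)| \leq \sqrt{y}\log y /(8\pi)$ to produce an estimate of shape $|\epsilon(q)| \leq (3\log q + 4)/(8\pi\sqrt{q})$. The contribution $-\epsilon(q)/\varphi(q)$ then splits between the $\tau_1(q)\varphi(q)^{-1}/(q-1)$ piece of the bound (where the Meissel--Mertens constant $B$ in $\tau_1(q)$ enters through the very definition of $\epsilon$) and a lower-order contribution to $\tau_2(q)\log q/\sqrt{q}$. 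The non-principal character sums are treated by two rounds of partial summation, using an explicit GRH-conditional bound of the form $|\sum_{p \leq y}\chi(p)| \leq \sqrt{y}\,(c_1\log y + c_2\log q + c_3)$, followed by the explicit tail integration $\int_q^\infty \sqrt{y}(\log y)^j y^{-2}\,dy = 2(\log q)^j/\sqrt{q} + O((\log q)^{j-1}/\sqrt{q})$. These assemble into the dominant $\tau_2(q)\log q/\sqrt{q}$ term, with the constants $15/(8\pi)$, $84.89/(8\pi)$, $22.29$, and $20.58$ traceable to the specific coefficients in the character sum bound and to the $1/q^{2/3}$ correction factor coming from handling small primes separately.

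\textbf{Main obstacle.} The technically hardest step will be obtaining an explicit GRH-conditional bound on $\sum_{p \leq y}\chi(p)$ sharp enough to yield the precise $\tau_2(q)$ stated, and tracking every lower-order coefficient through the two partial summations and tail integrals. Matching the $\tau_1(q)\varphi(q)^{-1}(q-1)^{-1}$ structure will likewise require a refined Mertens-type inequality of the exact shape $B + 1/(2\log q) + 1/(\log q)^2$, most plausibly a Schoenfeld-style explicit RH bound on $\sum_{p \leq q} 1/p$ in which the $(q-1)^{-1}$ factor arises naturally from a Rosser-type packaging of error terms, rather than from a blunt triangle inequality.
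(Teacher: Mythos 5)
Your central identity is correct: expanding $M(q,\ell)$ by orthogonality, and using that the only prime $p\le q$ with $p\equiv\ell\imod{q}$ is $p=\ell$ (when $\ell$ is prime), both $B$ and $\sum_{p\mid q}1/p$ cancel exactly, and the character term $\frac{1}{\varphi(q)}\sum_{\chi\neq\chi_0}\bar{\chi}(\ell)\sum_{p>q}\chi(p)/p$ is precisely $\sum_{p>q,\,p\equiv\ell\imod{q}}1/p-\frac{1}{\varphi(q)}\sum_{p>q}1/p$, the same quantity the paper controls in Lemma \ref{lem:computer}. So your front end is genuinely different from the paper, which instead inserts the Languasco--Zaccagnini formula \eqref{eq:C_LZ} to get $M(q,\ell)=\frac{\gamma}{\varphi(q)}+\sum_p\{\beta(p;q,\ell)/p+\varphi(q)^{-1}\log(1-1/p)\}$, splits at $p\le q$, and treats the initial block unconditionally via Rosser--Schoenfeld's bound on $\prod_{p\le q}(1-1/p)$ (Lemma \ref{lem:Aq}); your exact cancellation of $B$ and of the Mertens product is cleaner, at the modest cost of needing convergence of $\sum_p\chi(p)/p$ and Schoenfeld's RH bound $|\sum_{p\le x}1/p-\log\log x-B|<(3\log x+4)/(8\pi\sqrt{x})$ (valid only for $x$ not too small).

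There are, however, two concrete gaps. First, your accounting of the $\tau_1$ term is structurally wrong. In your identity $B$ cancels identically, and $|\epsilon(q)|/\varphi(q)\ll \log q/(\sqrt{q}\,\varphi(q))$ is \emph{larger} than $\varphi(q)^{-1}/(q-1)$, not lower order, so it cannot ``split'' into the $\tau_1$ piece; it must simply be absorbed into $\tau_2(q)\log q/\sqrt{q}$ (playing the role of the $\varphi(q)^{-1}$ corrections $f_5,f_6$ and the $3/\varphi(q)$ inside $f_1$ in Lemma \ref{lem:computer}). In the paper the $\tau_1(q)\varphi(q)^{-1}/(q-1)$ term has an entirely different origin, namely $\tfrac12\sum_{p>q}1/(p(p-1))$, the discrepancy between $-\log(1-1/p)$ and $1/p$ for $p>q$, bounded in Lemma \ref{lem:need_l8er}; that is where $B$ enters $\tau_1$, via Rosser--Schoenfeld's bound on $\sum_{p\le t}1/p$ inside a partial summation, not through the definition of $\epsilon$. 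Since your identity avoids logarithms of Euler factors altogether, no such term arises in your route; dropping a positive term from the right-hand side is harmless, but your stated mechanism would fail as written. Second, and more seriously, the theorem claims the specific function $\tau_2(q)$ for all $q\ge3$, whose coefficients come from feeding the Greni\'{e}--Molteni explicit Chebotar\"{e}v bound and Schoenfeld's $\theta$-bound through Lemma \ref{lem:computer}, replacing $\varphi(q)$ by the lower bounds of Lemma \ref{lem:varphi_lower_bounds} (the $q^{2/3}$ is Kendall--Osborn's $\varphi(q)>q^{2/3}$ for $q>30$, not a small-prime correction), and then verifying small $q$ numerically (this is how $3$ becomes $84.89$). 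Your character-by-character bound would produce different explicit coefficients, and the proposal defers exactly the verification that they are majorized by the stated $\tau_2(q)$ down to $q=3$ (including the restricted ranges of validity of the explicit inputs); without that, the stated inequality is not established.
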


\begin{table}[]
    \centering
    \begin{tabular}{c|cc}
        $q$ & $\tau_1(q)$ & $\tau_2(q)$ \\
        \hline
        $3$ & $1.54515$ & $43.26016$ \\
        $4$ & $1.14251$ & $26.72431$ \\
        $5$ & $0.95822$ & $19.94616$ \\
        $10$ & $0.66726$ & $10.40159$ \\
        $15$ & $0.58249$ & $7.92122$ \\
        $20$ & $0.53983$ & $6.73171$ \\
        $25$ & $0.51335$ & $6.01506$ \\
        $50$ & $0.45465$ & $4.49411$ \\
        $100$ & $0.41722$ & $3.58205$
    \end{tabular}
    \caption{Computations of $\tau_1(q)$, $\tau_2(q)$ in Theorem \ref{thm:explicit_fmla_constant} for a selection of $q\geq 3$.}
    \label{tab:comps}
\end{table}

It is important to note that $\tau_1(q)$ and $\tau_2(q)$ in Theorem \ref{thm:explicit_fmla_constant} are decreasing functions in $q$. To demonstrate this, we have provided a collection of computations in Table \ref{tab:comps}. It follows from Theorem \ref{thm:explicit_fmla_constant} and these computations that if $q\geq 3$ and the GRH is true, then we can prove the following result (Corollary \ref{cor:Norton_explicit}), which is clearly an effective version of \eqref{eqn:Norton_rewritten}. If one can consider $q$ in a restricted range, then the constant in Corollary \ref{cor:Norton_explicit} can be refined in a straightforward manner.

\begin{corollary}\label{cor:Norton_explicit}
If $q\geq 3$ and the GRH is true, then
\begin{equation*}
    \left| M(q,\ell) - \frac{\delta_{\ell}}{\ell} \right| \leq \frac{\red{43.95} \log{q}}{\sqrt{q}} .
\end{equation*}
\end{corollary}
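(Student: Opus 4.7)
The plan is to apply Theorem \ref{thm:explicit_fmla_constant} together with the triangle inequality, and then bound each of the three resulting contributions by a multiple of $\log q/\sqrt q$ whose coefficients sum to at most $43.95$ uniformly in $q\geq 3$. Concretely, writing
\[
\left| M(q,\ell) - \frac{\delta_\ell}{\ell} \right| \leq \left| M(q,\ell) - \frac{\delta_\ell}{\ell} + \frac{\log\log q}{\varphi(q)} \right| + \frac{\log\log q}{\varphi(q)}
\]
and invoking Theorem \ref{thm:explicit_fmla_constant} on the first summand, it suffices to bound
\[
\tau_1(q)\,\frac{\varphi(q)^{-1}}{q-1}\cdot\frac{\sqrt q}{\log q} \;+\; \tau_2(q) \;+\; \frac{\log\log q}{\varphi(q)}\cdot\frac{\sqrt q}{\log q}
\]
from above by $43.95$.

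The key observation is that each of the three summands is decreasing in $q$, with the entire sum maximised at $q = 3$. The definitions of $\tau_1$ and $\tau_2$ are manifestly decreasing in $q$, so $\tau_2(q) \leq \tau_2(3)\approx 43.26$ immediately handles the second summand. For the first summand, I would couple the decrease of $\tau_1$ with the trivial bounds $\varphi(q) \geq 2$ and $q-1 \geq 2$ for $q \geq 3$, and check that the resulting quantity attains its maximum at $q = 3$. For the third summand, I would invoke Lemma \ref{lem:varphi_lower_bounds} to get $\varphi(q) \geq \sqrt q$ for $q > 3$ with $q \neq 6$, so that
\[
\frac{\log\log q}{\varphi(q)} \leq \frac{\log\log q}{\log q}\cdot\frac{\log q}{\sqrt q} \leq \frac{1}{e}\cdot\frac{\log q}{\sqrt q}
\]
using the elementary calculus inequality $\log\log q/\log q \leq 1/e$ (attained at $q = e^e$); the remaining small cases $q = 3$ and $q = 6$ are then checked directly.

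Assembling these pieces at $q = 3$, the three contributions work out to roughly $0.609 + 43.260 + 0.074 = 43.943$, which sits just under $43.95$. For all larger $q$ the rapid decay of $\tau_2(q)$ visible in Table \ref{tab:comps} dominates the modest behaviour of the other two summands, and the small cases $q \in \{4, 5, 6\}$ can be verified numerically from the tabulated values of $\tau_1$ and $\tau_2$.

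The main obstacle is that the threshold $43.95$ is tight: it is essentially $\tau_2(3)$ plus a thin margin, so the strategy leans on both the monotonicity of $\tau_1,\tau_2$ and on the third summand being small precisely at $q = 3$. There is no deep analytic difficulty here, but the numerical accounting must be carried out carefully so as not to overshoot $43.95$. A pleasant consequence of this tightness is that the constant in the corollary can be sharpened substantially on any restricted range of $q$, since the values in Table \ref{tab:comps} fall off quickly.
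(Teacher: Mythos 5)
Your overall route is the same as the paper's: apply Theorem \ref{thm:explicit_fmla_constant}, absorb the term $\log\log q/\varphi(q)$ via the triangle inequality, write everything as a coefficient times $\log q/\sqrt q$, and show that coefficient stays below $43.95$, with the worst case at $q=3$ giving roughly $43.94$ (the paper freezes $\tau_1(q)\le 1.54515$ and $\tau_2(q)\le 43.26016$, uses $\varphi(q)>q^{2/3}$ for $q\ge 31$, and checks $3\le q\le 30$ by computation, finding the maximal coefficient $43.94332$). Your arithmetic at $q=3$ and your final conclusion are correct.

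Two of your intermediate justifications, however, are not correct as stated. First, the claim that ``each of the three summands is decreasing in $q$'' is false: the third summand $\sqrt q\,\log\log q/(\varphi(q)\log q)$ equals about $0.074$ at $q=3$ but about $0.236$ at $q=4$ and about $0.399$ at $q=6$, so it is not maximised at $q=3$; the sum is still maximised at $q=3$, but only because $\tau_2(q)$ drops by more than $16$ between $q=3$ and $q=4$, not because of termwise monotonicity. Second, for the first summand you cannot use the ``trivial bounds $\varphi(q)\ge 2$ and $q-1\ge 2$'' literally: replacing $q-1$ by $2$ leaves you with $\tau_1(q)\sqrt q/(4\log q)$, which tends to infinity with $q$ and already exceeds its $q=3$ value for moderate $q$; you must retain the factor $q-1$ (or invoke Lemma \ref{lem:varphi_lower_bounds} as the paper does with $\varphi(q)>q^{2/3}$). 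Both issues are easily repaired---for $q\ge 4$ one has $\tau_2(q)\le\tau_2(4)<26.73$ while the other two summands are each at most about $0.7$ once $\varphi(q)\ge\sqrt q$ (with $q=4,6$ checked by hand), so there is enormous slack---but the proof as written needs these corrections before the ``maximum at $q=3$'' step is legitimate.
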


We will prove Theorem \ref{thm:explicit_fmla_constant} in Section \ref{sec:main_theorems} in three important steps. First, in \eqref{eqn:Step2}, we will use \eqref{eq:C_LZ} to prove unconditionally that
\begin{equation}\label{eqn:look_at_me}
    \left| M(q,\ell) - \frac{\delta_{\ell}}{\ell} + \frac{\log\log{q}}{\varphi(q)} - \sum_{p>q} \left\{\frac{\beta(p;q,\ell)}{p} + \frac{1}{\varphi(q)} \log\left(1 - \frac{1}{p}\right)\right\} \right| \leq \frac{\varphi(q)^{-1}}{(\log{q})^2} ,
\end{equation}
in which $\beta(p;q,\ell) = 1$ if $p\equiv\ell\imod{q}$ and $\beta(p;q,\ell) = 0$ otherwise. Second, we control the sum over primes $p>q$ in \eqref{eqn:look_at_me} using technical lemmas and the explicit Prime Number Theorems (that depend on the GRH) presented in Section \ref{ssec:EPNT}. Finally, we will use the lower bounds for $\varphi(q)$ that are presented in Section \ref{ssec:varphi_lower_bounds} and computations to deduce Theorem \ref{thm:explicit_fmla_constant}.

In Section \ref{sec:proof_of_theorems}, we state some supporting results and import explicit versions of the Prime Number Theorem that will be useful later. Our main results, namely Theorem \ref{thm:explicit_fmla_constant} and Corollary \ref{cor:Norton_explicit}, will be proved in Section \ref{sec:main_theorems}. 

We suppose throughout that $O^\star$ is big-$O$ notation with implied constant at most one, $\varphi$ denotes the Euler totient function, $\gamma$ is the Euler--Mascheroni constant, and $M$ is the Meissel--Mertens constant. We always reserve $q$ for the modulus of the arithmetic progression under consideration and $p$ for rational primes.

\begin{remark}
It should be possible to extend the techniques we will present in this paper to prove an unconditional result. The argument would proceed along similar lines, but would require explicit and \textit{unconditional} versions of the Prime Number Theorem for primes in arithmetic progressions (e.g. \cite{BennettEtAlPAPS}) and the Prime Number Theorem (e.g. \cite{BroadbentEtAl, Rosser}). There would be extra (non-trivial) technical challenges to account for though. 
\end{remark}

\red{
\begin{remark}
This version reflects the paper that will published in the Proceedings of the Integers Conference 2023, with a few differences that will be marked in red. In particular, we observe a simple improvement in Lemma \ref{lem:Bq}, which leads to a meagre improvement in our main results. Our code to recover the constants in this paper is also available on GitHub: follow \href{https://github.com/EthanSLee/On-the-constants-in-Mertens-theorems-for-primes-in-arithmetic-progressions/blob/main/main.py}{\texttt{this link}}.
\end{remark}
}

\section{Technical Lemmas}\label{sec:proof_of_theorems}

In our proof of Theorem \ref{thm:explicit_fmla_constant}, we will require two technical lemmas which are presented in Section \ref{ssec:TL}. To prove one of these technical lemmas, we need explicit information about
\begin{equation*}
    \theta(x) = \sum_{p\leq x} \log{p} 
    \quad\text{and}\quad 
    \theta(x;q,\ell) = \sum_{\substack{p\leq x\\p\,\equiv\,\ell\imod{q}}} \log{p} .
\end{equation*}
These sums are well-known to be objects of interest in the Prime Number Theorem and the Prime Number Theorem for primes in arithmetic progressions. To this end, we present explicit versions of the error in each of these Prime Number Theorems in Section \ref{ssec:EPNT}. In general, we will assume the Riemann Hypothesis (RH) or Generalised Riemann Hypothesis (GRH), so that we can get the strongest bounds on $M(q,\ell)$ in the end. Finally, we give lower bounds for $\varphi(n)$ in Section \ref{ssec:varphi_lower_bounds} which will assist some computations in Section \ref{sec:main_theorems}.

\subsection{Explicit Versions of the Prime Number Theorem}\label{ssec:EPNT}

Suppose that $\mathbb{K}\subseteq \mathbb{L}$ is a Galois extension of number fields such that $n_{\mathbb{L}} = [\mathbb{L} : \mathbb{Q}]$, $\Delta_{\mathbb{L}}$ is the discriminant of $\mathbb{L}$, $\mathfrak{p}$ is a prime ideal of $\K$ that does not ramify in $\L$, $\sigma_{\mathfrak{p}}$ is the Frobenius class of $\mathfrak{p}$ in $\mathbb{L}/\mathbb{K}$, and $\mathcal{C}$ is any conjugacy class in $\mathcal{G} = \Gal(\L/\K)$. If $\K = \L$, then $\mathfrak{p}$ corresponds to the rational primes $2,3,5,\dots$. Moreover, if $\K = \Q$ and $\L = \Q(\omega_q)$, where $\omega_q$ is the $q$th root of unity, then there are are $\varphi(q)$ conjugacy classes $\mathcal{C}$ and $\mathfrak{p}$ such that $\sigma_{\mathfrak{p}} = \mathcal{C}$ correspond to primes in an arithmetic progression. Denoting the norm of $\mathfrak{p}$ in $\K$ by $N(\mathfrak{p})$, it follows that
\begin{equation*}
    \theta_{\mathcal{C}}(x) 
    = \sum_{\substack{N(\mathfrak{p}) \leq x\\\sigma_{\mathfrak{p}} = \mathcal{C}}} \log{N(\mathfrak{p})}
\end{equation*}
is a broad generalisation of $\theta(x)$ and $\theta(x;q,\ell)$.

Assuming that the GRH for Dedekind zeta-functions is true, Greni\'{e} and Molteni proved that for all $x\geq 2$, we have
\begin{equation}\label{eqn:GM_CDT_psi}
\begin{split}
    \Bigg|\sum_{\substack{N(\mathfrak{p}^r) \leq x\\\sigma_{\mathfrak{p}^r} = \mathcal{C}}} \log{N(\mathfrak{p})} &- \frac{\#\mathcal{C}}{\#\mathcal{G}} x\Bigg| \\
    &\leq \frac{\#\mathcal{C}}{\#\mathcal{G}} \sqrt{x} \left[\left(\frac{(\log{x})^2}{8\pi} + 2\right) n_{\mathbb{L}} + \left(\frac{\log{x}}{2\pi} + 2\right) \log{|\Delta_{\mathbb{L}}|}\right] .
\end{split}
\end{equation}
This is an explicit version of the Chebotar\"{e}v density theorem. Next, the norm is multiplicative and $N(\mathfrak{p}) = p^k$ for some unique rational prime $p$ and $1\leq k\leq n_{\K}$, so it follows from \cite[Theorem~13]{Rosser} that
\begin{equation*}
    0 \leq \sum_{\substack{N(\mathfrak{p}^r) \leq x\\\sigma_{\mathfrak{p}^r} = \mathcal{C}}} \log{N(\mathfrak{p})} - \theta_{\mathcal{C}}(x)
    \leq n_{\K} \sum_{\substack{p^r \leq x\\r\geq 2}} \log{p}
    \leq 1.42620 n_{\K} \sqrt{x} .
\end{equation*}
Therefore, \eqref{eqn:GM_CDT_psi} implies
\begin{equation}\label{eqn:GM_CDT_theta}
\begin{split}
    \Bigg|\theta_{\mathcal{C}}(x) &- \frac{\#\mathcal{C}}{\#\mathcal{G}} x\Bigg| \\
    &\leq \frac{\#\mathcal{C}}{\#\mathcal{G}} \sqrt{x} \left[\left(\frac{(\log{x})^2}{8\pi} + 2\right) n_{\mathbb{L}} + \left(\frac{\log{x}}{2\pi} + 2\right) \log{|\Delta_{\mathbb{L}}|}\right] + 1.43 n_{\K} \sqrt{x}.
\end{split}
\end{equation}
The following result describes the special cases of \eqref{eqn:GM_CDT_theta} that we will need.

\begin{theorem}\label{thm:pnts}
Suppose that the GRH is true. If $(\ell,q)=1$, $x\geq 2$, and $q\geq 3$, then
\begin{equation}\label{eqn:can_be_improved}
    \left|\theta(x;q,a) - \frac{x}{\varphi(q)}\right|
    < \left(\frac{(\log{x})^2}{8\pi} + \left(\frac{\log{x}}{2\pi} + 2\right)\log{q} + 3.43\right) \sqrt{x} 
\end{equation}
and
\begin{equation}\label{eqn:pnts_smallT}
    \left|\theta(x) - x\right|
    < \left(\frac{(\log{x})^2}{8\pi} + 3.43\right) \sqrt{x}  .
\end{equation}
\end{theorem}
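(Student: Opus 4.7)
The plan is to specialise the explicit Chebotar\"ev bound \eqref{eqn:GM_CDT_theta} to the two concrete Galois extensions that realise $\theta(x;q,\ell)$ and $\theta(x)$ respectively, and then simplify using standard data about cyclotomic fields.

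For \eqref{eqn:can_be_improved}, I would take $\K = \Q$ and $\L = \Q(\omega_q)$ the $q$th cyclotomic field, so that $\mathcal{G} = \Gal(\L/\K) \cong (\Z/q\Z)^{\times}$. Since this Galois group is abelian, every conjugacy class $\mathcal{C}$ is a singleton, so $\#\mathcal{C} = 1$ and $\#\mathcal{G} = \varphi(q)$. The Frobenius $\sigma_{\mathfrak{p}}$ at an unramified rational prime $p$ is the class of $p \imod q$, so choosing $\mathcal{C}$ to correspond to $\ell$ identifies $\theta_{\mathcal{C}}(x)$ with $\theta(x;q,\ell)$ (ramified primes contribute $O(\log q)$ and can be absorbed, or handled directly since the Greni\'e--Molteni bound is stated for primes with $\sigma_{\mathfrak{p}^r} = \mathcal{C}$, automatically excluding ramified primes from the main sum). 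The degree is $n_{\L} = \varphi(q)$, and the standard bound on the cyclotomic discriminant
\[
|\Delta_{\L}| = \frac{q^{\varphi(q)}}{\prod_{p \mid q} p^{\varphi(q)/(p-1)}} \leq q^{\varphi(q)}
\]
gives $\log|\Delta_{\L}| \leq \varphi(q)\log q$. Substituting these values into \eqref{eqn:GM_CDT_theta}, the factor $\#\mathcal{C}/\#\mathcal{G} = 1/\varphi(q)$ cancels against $n_{\L} = \varphi(q)$ in the first bracketed term and against $\log|\Delta_{\L}| \leq \varphi(q)\log q$ in the second, while the final $1.43\, n_{\K}\sqrt{x}$ contributes $1.43\sqrt{x}$ (since $n_{\K}=1$); combining with the constant $2$ from the first bracketed term gives $2 + 1.43 = 3.43$, producing exactly the right-hand side of \eqref{eqn:can_be_improved}.

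For \eqref{eqn:pnts_smallT}, I would take the trivial extension $\K = \L = \Q$. Then $\mathcal{G}$ is trivial, $\#\mathcal{C}/\#\mathcal{G} = 1$, $n_{\L} = n_{\K} = 1$, and $|\Delta_{\L}| = 1$, so $\log|\Delta_{\L}| = 0$. The sum $\theta_{\mathcal{C}}(x)$ degenerates to $\theta(x)$. Plugging these into \eqref{eqn:GM_CDT_theta} kills the $\log|\Delta_{\L}|$ term entirely and leaves $\sqrt{x}\bigl[(\log x)^2/(8\pi) + 2\bigr] + 1.43\sqrt{x}$, which is the stated bound.

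I do not foresee any real obstacle; the only minor care needed is to justify the strict inequality ``$<$'' in the statement rather than the ``$\leq$'' appearing in \eqref{eqn:GM_CDT_theta}, which follows from the slack introduced by rounding $1.42620$ up to $1.43$ and from the discriminant inequality being strict unless $q$ is squarefree. In short, Theorem~\ref{thm:pnts} is a direct corollary of \eqref{eqn:GM_CDT_theta} together with the cyclotomic discriminant estimate, and no new analytic input is required.
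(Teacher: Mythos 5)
Your proposal is correct and follows essentially the same route as the paper: specialise the explicit Chebotar\"ev bound \eqref{eqn:GM_CDT_theta} to $\K=\Q$, $\L=\Q(\omega_q)$ (with $n_{\L}=\#\mathcal{G}=\varphi(q)$, $\#\mathcal{C}=1$, and the cyclotomic discriminant bound $\log|\Delta_{\L}|\leq\varphi(q)\log q$) for \eqref{eqn:can_be_improved}, and to $\L=\K=\Q$ for \eqref{eqn:pnts_smallT}, with the strict inequality supplied by the slack from rounding $1.42620$ up to $1.43$ (your side remark that the discriminant inequality is strict ``unless $q$ is squarefree'' is slightly off --- it is strict for every $q\geq 2$ --- but nothing hinges on it).
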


\begin{proof}
Insert $\mathbb{L} = \mathbb{Q}(\omega_q)$ and $\K = \mathbb{Q}$ into \eqref{eqn:GM_CDT_theta} to retrieve \eqref{eqn:can_be_improved}. To see this, recall that $n_{\L} = \#\mathcal{G} = \varphi(q)$, $\#\mathcal{C} = 1$, and 
\begin{equation*}
    \Delta_{\mathbb{L}} = (-1)^{\tfrac{\varphi(q)}{2}} q^{\varphi(q)} \prod_{p|q} p^{-\tfrac{\varphi(q)}{p-1}} .
\end{equation*}
Likewise, insert $\L = \K = \Q$ into \eqref{eqn:GM_CDT_theta} to retrieve \eqref{eqn:pnts_smallT}.
\end{proof}

Further to the explicit result for $\theta(x)$ that was established in \eqref{eqn:pnts_smallT}, the following result from Schoenfeld does a better job when $x\geq 599$.

\begin{theorem}[\text{Schoenfeld \cite[Theorem~10]{Schoenfeld}}]\label{thm:pnts_bigT}
If the RH is true and $x\geq 599$, then
\begin{equation*}
    |\theta(x) - x| \leq \frac{\sqrt{x}(\log{x})^2}{8\pi} .
\end{equation*}
\end{theorem}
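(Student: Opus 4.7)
The plan is to follow the classical Riemann–von Mangoldt strategy underlying explicit prime number theorem bounds under RH, since the target form $|\theta(x)-x|\leq \sqrt{x}(\log x)^2/(8\pi)$ is exactly what one gets from optimally balancing the explicit formula with the truncation of its zero sum, together with the passage from $\psi$ to $\theta$. First I would work with $\psi(x) = \sum_{p^k \leq x}\log p$ rather than $\theta$, because $\psi$ is the natural object for the explicit formula. The starting point is the Riemann–von Mangoldt explicit formula
\begin{equation*}
\psi_0(x) = x - \sum_{\rho} \frac{x^{\rho}}{\rho} - \log(2\pi) - \tfrac{1}{2}\log(1 - x^{-2}),
\end{equation*}
valid at continuity points, together with its truncated form: for $x \geq 2$ and $T\geq 2$,
\begin{equation*}
\psi(x) = x - \sum_{|\Im\rho|\leq T}\frac{x^{\rho}}{\rho} + R(x,T),
\end{equation*}
where $R(x,T)$ is controlled in terms of $x$, $T$, and $\log x$ through a contour shift of $-\frac{1}{2\pi i}\int \frac{\zeta'}{\zeta}(s)\frac{x^s}{s}\,ds$.

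Assuming RH, every nontrivial zero satisfies $\rho = \tfrac12+i\gamma$, so $|x^\rho/\rho|=\sqrt{x}/|\rho|$. I would then bound $\sum_{|\gamma|\leq T}1/|\rho|$ using the Riemann–von Mangoldt zero-counting estimate $N(T) = \frac{T}{2\pi}\log\frac{T}{2\pi e} + O(\log T)$, and pick $T = T(x)$ so that the truncation error balances with the zero sum. The standard optimal choice is $T$ of order $\sqrt{x}\log x$ (Schoenfeld in fact ties $T$ to $x$ through a numerical optimization). With this choice, the dominant contribution from the zero sum is of size $\sqrt{x}\log x \cdot \log T /(2\pi) \sim \sqrt{x}(\log x)^2/(8\pi)$, producing the constant $1/(8\pi)$. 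One has to track every constant carefully here, both in the zero-counting function and in the contour-integral error $R(x,T)$, using quantitative bounds for $\zeta'/\zeta(s)$ in the critical strip.

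Next, I would convert the bound for $\psi$ to one for $\theta$. The standard identity
\begin{equation*}
\psi(x) - \theta(x) = \theta(\sqrt{x}) + \theta(x^{1/3}) + \theta(x^{1/4}) + \cdots,
\end{equation*}
combined with the trivial bound $\theta(y) \leq 1.02\, y$ for small $y$, yields $0 \leq \psi(x)-\theta(x) \leq \sqrt{x}+1.43 x^{1/3}$ or similar. For $x\geq 599$ this secondary term is absorbed into the leading $\sqrt{x}(\log x)^2/(8\pi)$; the cut-off $x \geq 599$ reflects precisely the size of $x$ at which $1 + (\text{lower-order contributions})$ fits under the main $(\log x)^2/(8\pi)$ envelope.

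The main obstacle is the second step: carrying constants through the truncated explicit formula, the zero-counting estimate, and the optimization of $T$ tightly enough to obtain exactly $1/(8\pi)$ rather than some larger absolute constant. This is why Schoenfeld's paper is a substantial technical exercise rather than a routine computation, and why in practice one imports the result as a black box, as done here.
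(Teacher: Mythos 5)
The paper does not prove this statement at all: it is imported verbatim as Theorem~10 of Schoenfeld \cite{Schoenfeld}, and the citation is the ``proof.'' Your closing remark that in practice one quotes the result as a black box is exactly what the authors do, so if your intent was to justify citing it, you are aligned with the paper. But read as a proof sketch, your outline has genuine gaps, and they are not merely cosmetic, because the entire content of the theorem is the explicit constant $1/(8\pi)$ and the explicit threshold $x\geq 599$ --- deferring ``track every constant carefully'' defers the whole theorem.

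Two concrete problems. First, the balancing as you describe it does not yield $1/(8\pi)$: under RH the truncated zero sum is of size $\sqrt{x}\sum_{|\gamma|\leq T}|\rho|^{-1}\approx \sqrt{x}\,(\log T)^2/(2\pi)$, which matches $\sqrt{x}(\log x)^2/(8\pi)$ only when $\log T\approx \tfrac12\log x$; with your stated choice $T\asymp\sqrt{x}\log x$ the bound exceeds the target by an unbounded lower-order term $\asymp \sqrt{x}\log x\log\log x$, and the displayed heuristic ``$\sqrt{x}\log x\cdot\log T/(2\pi)$'' is not the right shape for the zero sum. Schoenfeld avoids this with a smoothed (Riemann--von Mangoldt with averaging) explicit formula and numerically sharp zero-density input, not the bare truncated formula. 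Second, the passage from $\psi$ to $\theta$ with the \emph{same} constant cannot be done by ``absorbing'' $\psi(x)-\theta(x)\geq\sqrt{x}$ into an envelope that the $\psi$-bound already saturates: if all you know is $|\psi(x)-x|\leq\sqrt{x}(\log x)^2/(8\pi)$, then you only get $|\theta(x)-x|\leq\sqrt{x}(\log x)^2/(8\pi)+1.43\sqrt{x}$, which is weaker than the claim. One needs a strictly sharper intermediate estimate for $\psi(x)-x$ (with explicit lower-order savings), which is what Schoenfeld actually proves and what makes the cutoff $599$ emerge from computation rather than from the naive comparison you indicate. So either quote the theorem, as the paper does, or be prepared to reproduce Schoenfeld's smoothed explicit-formula machinery; the sketch as written would not compile into the stated inequality.
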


\begin{remark}
When $x$ is large, improvements over \eqref{eqn:can_be_improved} have been obtained in \cite[Corollary~1.1]{LeePNTPAP}. We have strived to keep the number of conditions on $x$ to a minimum, so we favored this form for this paper. Other explicit versions of the Prime Number Theorem could be applied with only minor variations to what we are presenting here. 
\end{remark}

\subsection{Technical Lemmas for the Proof of Theorem \ref{thm:explicit_fmla_constant}}\label{ssec:TL}

To prove Theorem \ref{thm:explicit_fmla_constant}, we will require two technical lemmas. The first is given below.

\begin{lemma}\label{lem:need_l8er}
If $x>1$, then
\begin{equation*}
    \left| \sum_{p > x} \frac{1}{p(p-1)} \right|
    < \frac{1}{x-1} \left(2 B + \frac{1}{\log{x}} + \frac{2}{(\log{x})^2}\right) .
\end{equation*}
\end{lemma}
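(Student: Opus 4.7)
The plan is to derive the inequality via Abel's summation formula combined with an explicit form of Mertens' second theorem. First, let $F(t) := \sum_{p \leq t} 1/p$, and note $F(y)/(y-1) \to 0$ as $y \to \infty$ since $F(y) \sim \log\log y$. Applying Abel summation with the weight $g(t) = 1/(t-1)$ (so that $g(p)/p = 1/[p(p-1)]$) yields
\[
\sum_{p>x} \frac{1}{p(p-1)} = -\frac{F(x)}{x-1} + \int_x^\infty \frac{F(t)}{(t-1)^2}\,dt.
\]
Next I would decompose $F(t) = \log\log t + s(t)$, where $s(t) := F(t) - \log\log t$, and remove the $\log\log t$ part by integration by parts (with $u = \log\log t$, $dv = dt/(t-1)^2$, hence $du = dt/(t\log t)$, $v = -1/(t-1)$), giving
\[
-\frac{\log\log x}{x-1} + \int_x^\infty \frac{\log\log t}{(t-1)^2}\,dt = \int_x^\infty \frac{dt}{t(t-1)\log t}.
\]
The divergent $\log\log x$ terms thus cancel, leaving
\[
\sum_{p>x} \frac{1}{p(p-1)} = -\frac{s(x)}{x-1} + \int_x^\infty \frac{s(t)}{(t-1)^2}\,dt + \int_x^\infty \frac{dt}{t(t-1)\log t}.
\]

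The next step is to invoke an explicit form of Mertens' second theorem such as $|F(t) - \log\log t - B| \leq 1/(\log t)^2$, from which the triangle inequality gives $|s(t)| \leq B + 1/(\log t)^2$. Three elementary estimates then close the argument: the identity $\int_x^\infty dt/(t-1)^2 = 1/(x-1)$; the monotonicity bound $\int_x^\infty dt/[(t-1)^2(\log t)^2] \leq 1/[(x-1)(\log x)^2]$ (using that $1/(\log t)^2$ is decreasing for $t>1$); and $\int_x^\infty dt/[t(t-1)\log t] \leq -\log(1-1/x)/\log x \leq 1/[(x-1)\log x]$, where the last step uses $-\log(1-1/x) = \sum_{k\geq1} 1/(kx^k) \leq \sum_{k\geq1} x^{-k} = 1/(x-1)$. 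Applying $|s(t)| \leq B + 1/(\log t)^2$ to both $|s(x)|/(x-1)$ and $\int_x^\infty |s(t)|/(t-1)^2\,dt$ contributes two copies of $B/(x-1)$ and two copies of $1/[(x-1)(\log x)^2]$, which combined with the $1/[(x-1)\log x]$ from the final integral reproduces the stated right-hand side $\frac{1}{x-1}\bigl(2B + \frac{1}{\log x} + \frac{2}{(\log x)^2}\bigr)$.

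The main technical obstacle is checking that the inequality $|F(t) - \log\log t - B| \leq 1/(\log t)^2$, or at least its weaker consequence $|s(t)| \leq B + 1/(\log t)^2$, holds uniformly throughout the integration range $[x,\infty)$. Standard explicit Mertens-type estimates (Rosser--Schoenfeld, Dusart, etc.) secure this for $t$ beyond a modest threshold, and for $x$ (or for $t$) below the threshold the inequality can be verified directly at the finite set of relevant integer values; alternatively, since the right-hand side of the lemma grows without bound as $x \to 1^+$ and dominates the full convergent sum $\sum_p 1/[p(p-1)]$ easily in that regime, one can dispose of small $x$ by a trivial comparison.
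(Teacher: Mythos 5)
Your argument is correct and is essentially the paper's own proof: the same partial summation with $F(t)=\sum_{p\le t}1/p$, the same Rosser--Schoenfeld explicit Mertens input, the same integration by parts to cancel the $\log\log$ terms, and the same elementary integral estimates, yielding the identical constants $2B$, $1/\log x$, $2/(\log x)^2$. The only difference is that the paper sidesteps the obstacle you flag: since $F(t)\ge 0$, the triangle inequality together with just the one-sided upper bound $F(t)<\log\log t+B+1/(\log t)^2$, valid for all $t>1$ by \cite[(3.20)]{Rosser}, suffices, so no two-sided bound (whose lower half is not an off-the-shelf citation for all $t>1$) and no finite verification below a threshold are needed.
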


\begin{proof}
Partial summation tells us
\begin{equation*}
    \sum_{p > x} \frac{1}{p(p-1)} = \frac{M_2(x)}{1-x} + \int_x^\infty \frac{M_2(t)}{(t-1)^2}\,dt,
\end{equation*}
in which
\begin{equation}\label{eqn:rosser_bound}
    M_2(x) = \sum_{p\leq x} \frac{1}{p} < u(x) := \log\log{x} + B + \frac{1}{(\log{x})^2}
    \quad\text{for}\quad x>1,
\end{equation}
using \cite[(3.20)]{Rosser}. It follows, using \eqref{eqn:rosser_bound} and integration by parts, that
\begin{align*}
    &\left| \sum_{p > x} \frac{1}{p(p-1)} \right|
    < \frac{u(x)}{x-1} + \int_x^\infty \frac{\log\log{t}}{(t-1)^2}\,dt + \left(B + \frac{1}{(\log{x})^2}\right)\int_x^\infty \frac{dt}{(t-1)^2} \\
    &\qquad\qquad=\frac{u(x)}{x-1} - \frac{\log\log{x}}{x-1} + \int_x^\infty \frac{t^{-1}dt}{(t-1) \log{t}} + \left(B + \frac{1}{(\log{x})^2}\right)\int_x^\infty \frac{dt}{(t-1)^2} \nonumber\\
    &\qquad\qquad\leq \frac{u(x) - \log\log{x}}{x-1} + \left(B + \frac{1}{\log{x}} + \frac{1}{(\log{x})^2}\right)\int_x^\infty \frac{dt}{(t-1)^2} \\
    &\qquad\qquad= \frac{1}{x-1} \left(2 B + \frac{1}{\log{x}} + \frac{2}{(\log{x})^2}\right). \qedhere
\end{align*}
\end{proof}

Next, the Prime Number Theorem for primes in arithmetic progression tells us to expect
\begin{equation*}
    \sum_{\substack{p> T \\ p\,\equiv\,\ell\imod{q}}} \frac{1}{p} \sim \frac{1}{\varphi(q)}\sum_{p> T} \frac{1}{p} 
    \qquad\text{for}\qquad T\geq 3 ,
\end{equation*}
but we will need an effective description of the error in this relationship later. To this end, we prove the next technical lemma, which relies on three applications of the explicit Prime Number Theorems that were presented in Section \ref{ssec:EPNT}. 

\begin{lemma}\label{lem:computer}
Suppose the GRH is true and set
\begin{equation*}
\begin{split}
    &f_1(q) :=  \frac{15 + \frac{3}{\varphi(q)}}{8\pi}, \quad f_2(q) :=\frac{6 + \frac{15}{4\pi} + \frac{3}{4\pi\varphi(q)}}{\log{q}}, \quad f_3(q) :=  \frac{22.29}{(\log{q})^2},\\ &f_4(q) := \frac{20.58}{(\log{q})^3},\quad
    f_5(q) := \frac{3.43 + \frac{1}{4\pi}}{\varphi(q) (\log{q})^2},\quad \text{and} \quad  f_6(q) := \frac{3}{4\pi \varphi(q)(\log{q})^3}.
\end{split}
\end{equation*}
If $q \geq 3$, then
\begin{equation*}    \Bigg|\sum_{\substack{p\,\equiv\,\ell\imod{q}\\p > q}} \frac{1}{p} - \frac{1}{\varphi(q)} \sum_{p > q} \frac{1}{p}\Bigg|\leq \frac{\log{q}}{\sqrt{q}} \sum_{i=1}^6 f_i(q) .
\end{equation*}
If $q \geq 599$, then
\begin{equation*}
    \Bigg|\sum_{\substack{p\,\equiv\,\ell\imod{q}\\p > q}} \frac{1}{p} - \frac{1}{\varphi(q)} \sum_{p > q} \frac{1}{p}\Bigg|\leq \frac{\log{q}}{\sqrt{q}} \sum_{i=1}^4 f_i(q) .
\end{equation*}
\end{lemma}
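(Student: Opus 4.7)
The plan is to apply partial summation to rewrite the difference as a Stieltjes integral against
\begin{equation*}
H(t) := \theta(t;q,\ell) - \theta(t)/\varphi(q),
\end{equation*}
and then bound $|H(t)|$ using the GRH-based explicit Prime Number Theorems of Section~\ref{ssec:EPNT}. Since $d\theta(t;q,\ell)$ has mass $\log p$ at primes $p\equiv \ell \imod{q}$ and $d\theta(t)$ has mass $\log p$ at primes $p$, each of the two sums on the left may be written as a Stieltjes integral against $1/(t\log t)$. Subtracting the two and integrating by parts yields
\begin{equation*}
\sum_{\substack{p > q \\ p \equiv \ell \imod{q}}} \frac{1}{p} - \frac{1}{\varphi(q)} \sum_{p > q} \frac{1}{p}
= -\frac{H(q)}{q \log q} + \int_q^\infty H(t) \cdot \frac{\log t + 1}{t^2 (\log t)^2}\,dt,
\end{equation*}
where the boundary contribution at infinity vanishes because the GRH bounds from Theorem~\ref{thm:pnts} force $H(x) = O(\sqrt{x}(\log x)^2)$.

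Next, I would split $|H(t)| \leq |\theta(t;q,\ell) - t/\varphi(q)| + |\theta(t) - t|/\varphi(q)$ and insert the bounds \eqref{eqn:can_be_improved} and \eqref{eqn:pnts_smallT} to write $|H(t)|$ as a sum of simple terms of the form $\sqrt{t}\cdot(\log t)^k$ for $k \in \{0,1,2\}$, with coefficients involving $\log q$ and, in some cases, a factor $1/\varphi(q)$. Substituting these into the integral reduces everything to elementary integrals of the form $\int_q^\infty (\log t)^j / t^{3/2}\,dt$ for $j \in \{-2,-1,0,1\}$: the positive-$j$ integrals are evaluated directly, while the negative-$j$ ones are handled either by repeated integration by parts or by the coarse estimate $\log t \geq \log q$ on $[q,\infty)$, and all of them evaluate to $(\log q)^j \cdot O(1/\sqrt{q})$. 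The boundary contribution $|H(q)|/(q \log q)$ is estimated identically by applying the same GRH inequalities at $t = q$ and dividing.

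Finally, collecting all contributions and factoring out the common $\log q/\sqrt q$, the remaining constants reassemble into exactly the six functions $f_1,\dots,f_6$ in the statement. For $q \geq 599$ one replaces \eqref{eqn:pnts_smallT} by Schoenfeld's Theorem~\ref{thm:pnts_bigT}, which zeroes out the additive constant $3.43$ in the bound for $|\theta(t) - t|$; since $f_5$ and $f_6$ are precisely the terms carrying that $3.43$ (and the paired $1/(4\pi)$ coming from the coefficient $(\log t)/(2\pi)$ of the weaker $\theta(t)$ bound), those two terms drop out of the sum. The main obstacle is bookkeeping rather than conceptual: carefully combining the two GRH error sources, integrating against the kernel $(\log t + 1)/(t^2 (\log t)^2)$, and verifying that the coefficients of the $(\log q)^{-2}$ and $(\log q)^{-3}$ pieces end up bounded by $22.29$ and $20.58$ respectively. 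This is the kind of explicit-constants computation best double-checked symbolically, as the authors do in their accompanying code.
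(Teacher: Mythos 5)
Your proposal is correct and follows essentially the same route as the paper: partial summation against the kernel $1/(t\log t)$, insertion of the GRH bounds \eqref{eqn:can_be_improved}, \eqref{eqn:pnts_smallT}, and (for $q\geq 599$) Theorem \ref{thm:pnts_bigT}, followed by evaluation of the resulting elementary integrals, with your identification of $f_5$ and $f_6$ as exactly the terms inherited from the additive constant $3.43$ matching the paper's computation. The only cosmetic difference is that you subtract first, working with $H(t)=\theta(t;q,\ell)-\theta(t)/\varphi(q)$, whereas the paper bounds each sum separately against the common main term $-Th(T)-\int_T^\infty t\,h'(t)\,dt$ and subtracts at the end --- the same triangle-inequality split performed in a different order.
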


\begin{proof}
Suppose that $h(t) = \frac{1}{t\log{t}}$. It is clear that $h(t) > 0$ and $h'(t) < 0$, so it follows from Theorem \ref{thm:pnts_bigT} that if $T\geq 599$, then 
\begin{align}
    \sum_{p> T} &\,\frac{1}{p} 
    = - \theta(T) h(T) - \int_T^\infty \theta(t) h'(t) \,dt \nonumber\\
    &= - T h(T) - \int_T^\infty t h'(t) \,dt + O^\star\left(\frac{h(T) \sqrt{T} (\log{T})^2}{8\pi} - \int_T^\infty \frac{h'(t) \sqrt{t} (\log t)^2}{8\pi}\,dt\right) \nonumber\\
    &= - \frac{1}{\log{T}} - \int_T^\infty t h'(t) \,dt + O^\star\left(\frac{\log{T}}{8\pi \sqrt{T}} + \frac{\log{T} + 3}{4\pi \sqrt{T}}\right) \nonumber\\
    &= - \frac{1}{\log{T}} - \int_T^\infty t h'(t) \,dt + O^\star\!\left(\left(\frac{3}{8\pi} + \frac{3}{4\pi\log{T}}\right) \frac{\log{T}}{\sqrt{T}} \right) . \label{eqn:moi1}
\end{align}
Likewise, it follows from \eqref{eqn:pnts_smallT} in Theorem \ref{thm:pnts} that if $T\geq 3$, then 
\begin{align}
    \sum_{p> T} &\,\frac{1}{p} + T h(T) + \int_T^\infty t h'(t) \,dt \nonumber\\
    &= O^\star\left(\frac{h(T) \sqrt{T} (\log{T})^2}{8\pi} + 3.43 h(T) \sqrt{T} - \int_T^\infty h'(t) \sqrt{t} \left(\frac{(\log t)^2}{8\pi} + 3.43\right) \,dt\right) \nonumber\\
    &= O^\star\left(\frac{\log{T} + \frac{3.43}{\log{T}}}{8\pi \sqrt{T}} + \left(1 + \frac{1}{(\log{T})^2}\right)\frac{\log{T} + 3}{4\pi \sqrt{T}}\right) \nonumber\\
    &= O^\star\!\left(\left(\frac{3}{8\pi} + \frac{3}{4\pi \log{T}} + \frac{3.43 + \frac{1}{4\pi}}{(\log{T})^2} + \frac{3}{4\pi (\log{T})^3}\right)\frac{\log{T}}{\sqrt{T}} \right) . \label{eqn:moi2}
\end{align}
Furthermore, it follows from \eqref{eqn:can_be_improved} in Theorem \ref{thm:pnts} that if $T\geq 3$, then
\begin{align}
    \sum_{\substack{p> T \\ p \equiv \ell \imod{q}}} &\frac{1}{p} + \frac{T h(T)}{\varphi(q)} + \int_T^\infty \frac{t h'(t)}{\varphi(q)}\,dt \nonumber\\
    &= O^\star\left( \left(\frac{(\log{T})^2}{8\pi} + \left(\frac{\log{T}}{2\pi} + 2\right)\log{q} + 3.43\right) \sqrt{T} h(T) \right.\nonumber\\
    &\qquad\quad\quad \left. - \int_T^\infty \left(\frac{(\log{t})^2}{8\pi} + \left(\frac{\log{t}}{2\pi} + 2\right)\log{q} + 3.43\right) \sqrt{t} h'(t) \,dt \right) \nonumber\\
    &= O^\star\left( \left(\frac{(\log{T})^2}{8\pi} + \left(\frac{\log{T}}{2\pi} + 2\right)\log{q} + 3.43\right) \frac{1}{\sqrt{T}\log{T}} \right.\nonumber\\
    &\qquad\quad\quad \left. + \left(\frac{1}{4\pi} + \frac{6.86}{(\log{T})^2} + \left(\frac{1}{\pi\log{T}} + \frac{4}{(\log{T})^2}\right)\log{q}\right) \frac{\log{T} + 3}{\sqrt{T}} \right) \nonumber\\
    &= O^\star\!\left(\frac{\eta_1(T) \log{T} + \eta_2(T) \log{q}}{\sqrt{T}} \right) , \label{eqn:moi3}
\end{align}
in which
\begin{align*}
    \eta_1(T) &= \frac{3}{8\pi} + \frac{3}{4\pi\log{T}} + \frac{10.29}{(\log{T})^2} + \frac{20.58}{(\log{T})^3} , \\
    \eta_2(T) &= \frac{3}{2\pi} + \frac{6 + 3\pi^{-1}}{\log{T}} + \frac{12}{(\log{T})^2} .
\end{align*}
Finally, \eqref{eqn:moi1}, \eqref{eqn:moi2}, and \eqref{eqn:moi3} tell us that if $q\geq 3$ and the GRH is true, then
\begin{align*}
    &\Bigg|\sum_{\substack{p\,\equiv\,\ell\imod{q}\\p > q}} \frac{1}{p} - \frac{1}{\varphi(q)} \sum_{p > q} \frac{1}{p}\Bigg| \\
    &\,\,\,\leq 
    \begin{cases}
        \displaystyle \left(\left(\frac{3}{8\pi} + \frac{3}{4\pi \log{q}} \right)\frac{1}{\varphi(q)} + f_5(q) + f_6(q) + \eta_1(q) + \eta_2(q)\right) \frac{\log{q}}{\sqrt{q}} &\text{if }q\geq 3,\\
        ~\\
        \displaystyle \left(\left(\frac{3}{8\pi} + \frac{3}{4\pi \log{q}}\right)\frac{1}{\varphi(q)} + \eta_1(q) + \eta_2(q)\right) \frac{\log{q}}{\sqrt{q}} &\text{if }q\geq 599,
    \end{cases} 
\end{align*}
and the result follows.
\end{proof}

\subsection{Lower Bounds for the Euler Totient-Function}\label{ssec:varphi_lower_bounds}

For the purpose of our final computations, we will require explicit lower bounds for $\varphi(n)$, a selection of which are presented in the following lemma.

\begin{lemma}\label{lem:varphi_lower_bounds}
If $n\geq 3$, then 
\begin{equation*}
    \varphi(n) > 
    \begin{cases}
        \displaystyle \frac{\log{2}}{2} \frac{n}{\log{n}} &\text{if $n\geq 3$,}\\
        \displaystyle \sqrt{n} &\text{if $n\neq 3$ and $n\neq 6$,}\\
        \displaystyle n^{2/3} &\text{if $n > 30$.}
    \end{cases}
\end{equation*}
\end{lemma}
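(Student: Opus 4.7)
All three inequalities are multiplicative statements about $\varphi(n)/n = \prod_{p\mid n}(1-1/p)$, so my plan is to expand each ratio as a product over prime powers $p^a$ exactly dividing $n$, reduce the claim to a local inequality at each prime power, and finish off the finitely many bad prime powers by direct enumeration.

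For the bound $\varphi(n) > n^{2/3}$ when $n > 30$, I would expand
\[
    \frac{\varphi(n)}{n^{2/3}} = \prod_{p^a \| n} p^{a/3-1}(p-1),
\]
note that each local factor strictly exceeds $1$ unless $p^a \in \{2, 3, 4, 8\}$, and then enumerate the $n$ built only from these ``bad'' prime powers together with a short list of small mixed cases to confirm that the last failure occurs at $n = 30$. The bound $\varphi(n) > \sqrt{n}$ for $n \neq 3,6$ is handled identically: the local factor $\varphi(p^a)/\sqrt{p^a} = p^{a/2-1}(p-1)$ is $\geq 1$ for every prime power, with strict inequality except at $p^a = 2$ (where it is $1/\sqrt{2} < 1$) and $p^a = 4$ (where it equals $1$), so a brief enumeration isolates $n \in \{2, 6\}$ as the only places where $\varphi(n) \leq \sqrt{n}$.

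The deepest assertion is $\varphi(n) > (\log 2 / 2) \cdot n/\log n$ on $n \geq 3$, equivalently $n/\varphi(n) < (2/\log 2)\log n$. Here I would invoke an explicit Rosser--Schoenfeld bound of the form
\[
    \frac{n}{\varphi(n)} < e^\gamma \log\log n + \frac{2.51}{\log\log n}
\]
for $n$ above a moderate threshold, and then show by elementary calculus that $e^\gamma \log\log n + 2.51/\log\log n < (2/\log 2)\log n$ for all such $n$; since $e^\gamma \log\log n$ grows far more slowly than $(2/\log 2)\log n$, this reduces to verifying a single inequality in $L = \log n$ once $L$ is large enough that $\log L \geq 1$. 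For the remaining small $n$, a direct numerical check of $\varphi(n)$ against $(\log 2/2)n/\log n$ suffices. The main obstacle is cleanly marrying the asymptotic Mertens input to the required strict inequality at small $n$: for $n$ in the regime where $\log\log n$ is tiny, the $2.51/\log\log n$ term in the Rosser--Schoenfeld estimate blows up and a direct small-case verification (rather than an asymptotic argument) must take over, so the bulk of the work is delineating the crossover range and running the finite check there.
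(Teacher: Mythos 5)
Your route is genuinely different from the paper's: the paper proves this lemma purely by citation (Hatalov\'{a}--\v{S}al\'{a}t for the first bound, Vaidya for the second, Kendall--Osborn for the third, located via the Handbook of Number Theory), whereas you give a self-contained argument, factoring $\varphi(n)/n^{\alpha}$ over the prime powers exactly dividing $n$ and disposing of the finitely many bad local factors by enumeration, with Rosser--Schoenfeld's explicit bound for $n/\varphi(n)$ handling the first inequality. The plan for the first bound is sound: the crossover where $e^{\gamma}\log\log n+2.51/\log\log n$ drops below $(2/\log 2)\log n$ occurs at single-digit $n$, so the residual finite check is trivial.

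There is, however, a genuine problem with the asserted outcomes of your finite checks in the other two cases. For the $\sqrt{n}$ bound, $\varphi(4)=2=\sqrt{4}$, so the set where $\varphi(n)\le\sqrt{n}$ is $\{1,2,4,6\}$, not $\{2,6\}$; the strict inequality in the displayed statement already fails at $n=4$ (the classical result behind it is the non-strict $\varphi(n)\ge\sqrt{n}$ for $n\ne 2,6$). More seriously, for the $n^{2/3}$ bound your enumeration cannot ``confirm that the last failure occurs at $n=30$'', because $n=42$ is a failure: $\varphi(42)=12$ and $12^{3}=1728<1764=42^{2}$, so $\varphi(42)<42^{2/3}$. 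In your own local factors, $42=2\cdot 3\cdot 7$ has bad part $2\cdot 3$ contributing $2^{-2/3}\cdot 2\cdot 3^{-2/3}\approx 0.606$ and a single good factor $6\cdot 7^{-2/3}\approx 1.640$, with product $\approx 0.993<1$ --- exactly the kind of ``mixed case'' you promise to dispatch in passing. So a correct execution of your method would not prove the statement as displayed; it would instead show that the third bound requires $n>42$ (or a non-strict form with an explicit exceptional set) and would flag the equality at $n=4$ in the second. Separately, the mixed-case step needs an actual finiteness argument (e.g.\ every good local factor is at least $2^{1/3}$ while the bad part contributes at least $2^{-2/3}\cdot 2\cdot 3^{-2/3}$, so only boundedly many good prime powers can appear in a failure); as written, ``a short list of small mixed cases'' is an assertion rather than an argument, and it is precisely in those mixed cases that the counterexample $42$ lives.
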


\begin{proof}
Hatalov\'{a} and \v{S}al\'{a}t proved the result for $n\geq 3$ in \cite{HatlovaSalat}. 
Vaidya proved the result for $n\neq 3$ and $n\neq 6$ in \cite{Vaidya}.
Kendall and Osborn proved the result for $n > 30$ in \cite{KendallOsborn}. 
These references were found in \cite{HandbookNTI}.
\end{proof}

\section{Proof of Main Results}\label{sec:main_theorems}

In this section, we prove the main results of this paper, bringing forward all previously established notations. 

\subsection{Proof of Theorem \ref{thm:explicit_fmla_constant}}

To begin, insert \eqref{eq:C_LZ} into the definition of $M(q,\ell)$ in \eqref{eqn:MertensSecondAPs} to see that
\begin{align}
    M(q,\ell) &= - \log{C(q,\ell)} +  B(q,\ell) \nonumber\\
    &= \frac{\gamma}{\varphi(q)} - \sum_p \frac{\alpha(p;q,\ell)}{\varphi(q)} \log\left(1 - \frac{1}{p}\right) + \sum_{p\,\equiv\,\ell\imod{q}} \left\{\frac{1}{p} + \log\left(1 - \frac{1}{p}\right)\right\} \nonumber\\
    &= \frac{\gamma}{\varphi(q)} + \sum_{p\,\equiv\,\ell\imod{q}} \left\{\frac{1}{p} + \left(1 - \frac{\varphi(q) - 1}{\varphi(q)}\right)\log\left(1 - \frac{1}{p}\right)\right\} \nonumber\\
    &\hspace{6cm}+ \sum_{p\,\not\equiv\,\ell\imod{q}} \frac{1}{\varphi(q)} \log\left(1 - \frac{1}{p}\right) \nonumber\\
    &= \frac{\gamma}{\varphi(q)} + \sum_{p} \left\{\frac{\beta(p;q,\ell)}{p} + \frac{1}{\varphi(q)} \log\left(1 - \frac{1}{p}\right)\right\} . \label{eqn:Step1}
\end{align}
Re-write the sum over primes in \eqref{eqn:Step1} as
\begin{equation*}
    \sum_{p} \left\{\frac{\beta(p;q,\ell)}{p} + \frac{1}{\varphi(q)} \log\left(1 - \frac{1}{p}\right)\right\}
    = \mathcal{A}(q) + \mathcal{B}(q) ,
\end{equation*}
in which
\begin{align*}
    \mathcal{A}(q) &= \sum_{p \leq q} \left\{\frac{\beta(p;q,\ell)}{p} + \frac{1}{\varphi(q)} \log\left(1 - \frac{1}{p}\right)\right\} , \\
    \mathcal{B}(q) &= \sum_{p > q} \left\{\frac{\beta(p;q,\ell)}{p} + \frac{1}{\varphi(q)} \log\left(1 - \frac{1}{p}\right)\right\} .
\end{align*}
We will bound $\mathcal{A}(q)$ and $\mathcal{B}(q)$ in the following lemmas. The result on $\mathcal{A}(q)$ is unconditional, whereas the bound on $\mathcal{B}(q)$ depends on the GRH.

\begin{lemma}\label{lem:Aq}
If $q\geq 2$, then
\begin{equation*}
    \mathcal{A}(q) = \frac{\delta_{\ell}}{\ell} - \frac{\gamma + \log\log{q}}{\varphi(q)} + O^\star\!\left(\frac{\varphi(q)^{-1}}{(\log{q})^2}\right) .
\end{equation*}
\end{lemma}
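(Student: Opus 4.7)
The plan is to decompose $\mathcal{A}(q)$ into its two natural pieces and evaluate each separately. Specifically, I would write
\begin{equation*}
    \mathcal{A}(q) = S_1(q) + S_2(q), \quad S_1(q) = \sum_{\substack{p \leq q \\ p\,\equiv\,\ell\imod{q}}} \frac{1}{p}, \quad S_2(q) = \frac{1}{\varphi(q)}\sum_{p \leq q} \log\!\left(1 - \frac{1}{p}\right).
\end{equation*}

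For $S_1(q)$, I would observe that since $1 \leq \ell \leq q$ with $(\ell,q) = 1$, any prime $p \leq q$ satisfying $p \equiv \ell \imod{q}$ is forced to equal $\ell$. Indeed, if $p < q$ then $p \equiv \ell \imod{q}$ with $1 \leq p, \ell \leq q-1$ gives $p = \ell$ directly, while $p = q$ would force $\ell \equiv 0 \imod{q}$, contradicting $(\ell, q) = 1$. Consequently $S_1(q) = 1/\ell$ exactly when $\ell$ is itself a prime, and $S_1(q) = 0$ otherwise, i.e., $S_1(q) = \delta_\ell/\ell$ in every case.

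For $S_2(q)$, I would invoke an explicit form of Mertens' third theorem of the shape
\begin{equation*}
    \left|\sum_{p \leq q} \log\!\left(1 - \frac{1}{p}\right) + \gamma + \log\log q\right| \leq \frac{1}{(\log q)^2},
\end{equation*}
valid for all $q \geq 2$; such inequalities follow (with room to spare in the coefficients) from the classical Rosser--Schoenfeld bounds on $\prod_{p \leq x}(1-1/p)$. Dividing through by $\varphi(q)$ gives $S_2(q) = -(\gamma + \log\log q)/\varphi(q) + O^\star(\varphi(q)^{-1}(\log q)^{-2})$, and adding this to the exact value $S_1(q) = \delta_\ell/\ell$ yields the claimed formula.

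The main obstacle I anticipate is pinning down an explicit Mertens bound with leading constant at most $1$ (as required by the $O^\star$ notation) that holds uniformly for the full range $q \geq 2$. If the sharpest readily available Rosser--Schoenfeld inequality has an exceptional prefactor for some small values of $q$, I would verify those finitely many exceptional cases by direct numerical evaluation of $\sum_{p \leq q} \log(1 - 1/p)$ and retain the uniform bound above a small threshold. The arithmetic-progression congruence argument for $S_1$, by contrast, is exact and requires no estimates.
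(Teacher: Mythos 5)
Your proposal is correct and follows essentially the same route as the paper: the paper likewise evaluates the congruence sum exactly as $\delta_{\ell}/\ell$ (a prime $p\leq q$ with $p\equiv\ell\imod{q}$ forces $p=\ell$) and handles the second piece by taking the logarithm of the Rosser--Schoenfeld bound $\left|\prod_{p\leq q}\left(1-\frac{1}{p}\right)-\frac{e^{-\gamma}}{\log q}\right|\leq \frac{e^{-\gamma}}{(\log q)^{3}}$ for $q\geq 2$, which is exactly the explicit Mertens-type estimate you invoke. The only delicate point---absorbing $\log\!\left(1+O^\star\!\left(\frac{1}{(\log q)^2}\right)\right)$ into $O^\star\!\left(\frac{1}{(\log q)^2}\right)$ when $q$ is small---is the same issue you already flag and propose to settle by checking finitely many small moduli numerically.
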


\begin{proof}
Recall that \cite[(3.25)-(3.27)]{Rosser} tell us that
\begin{equation*}
    \left| \prod_{p\leq q}\left(1 - \frac{1}{p}\right) - \frac{e^{-\gamma}}{\log{q}} \right| \leq
    \begin{cases}
        \frac{e^{-\gamma}}{2(\log{q})^3} & \text{if } q\geq 285 , \\
        \frac{e^{-\gamma}}{(\log{q})^3} & \text{if } q\geq 2 .
    \end{cases}
\end{equation*}
It follows that 
\begin{align*}
    \mathcal{A}(q) 
    &= \sum_{\substack{p \leq q\\p\,\equiv\,\ell\imod{q}}} \frac{1}{p} + \frac{1}{\varphi(q)} \log\prod_{p \leq q} \left(1 - \frac{1}{p}\right) \\
    &= \sum_{\substack{p \leq q\\p\,\equiv\,\ell\imod{q}}} \frac{1}{p} + \frac{1}{\varphi(q)} \log\!\left( \frac{e^{-\gamma}}{\log{q}} \left(1 + O^\star\!\left(\frac{1}{(\log{q})^2}\right)\right)\right) \\
    &= \sum_{\substack{p \leq q\\p\,\equiv\,\ell\imod{q}}} \frac{1}{p} - \frac{\gamma + \log\log{q}}{\varphi(q)} + \frac{1}{\varphi(q)}\log\!\left(1 + O^\star\!\left(\frac{1}{(\log{q})^2}\right)\right) \\
    &= \sum_{\substack{p \leq q\\p\,\equiv\,\ell\imod{q}}} \frac{1}{p} - \frac{\gamma + \log\log{q}}{\varphi(q)} + O^\star\!\left(\frac{\varphi(q)^{-1}}{(\log{q})^2}\right) .
\end{align*}
Moreover, $p\leq q$ is congruent to $\ell\imod{q}$ if and only if $\ell$ is also prime, so the result follows naturally.
\end{proof}

\begin{lemma}\label{lem:Bq}
Assume the GRH and recall the definitions of $f_i(q)$ for $1\leq i\leq 6$ that were introduced in Lemma \ref{lem:computer}. If $q\geq 3$, then
\begin{equation*}
    |\mathcal{B}(q)|
    \leq \tau_1(q) \frac{\varphi(q)^{-1}}{q-1} + \frac{\log{q}}{\sqrt{q}} \sum_{i=1}^6 f_i(q) .
\end{equation*}
If $q \geq 599$, then 
\begin{equation*}
    |\mathcal{B}(q)|
    \leq \tau_1(q) \frac{\varphi(q)^{-1}}{q-1} + \frac{\log{q}}{\sqrt{q}} \sum_{i=1}^4 f_i(q) .
\end{equation*}
\end{lemma}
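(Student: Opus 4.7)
The strategy is to separate $\mathcal{B}(q)$ into the tail of the arithmetic progression sum and a logarithmic tail that can be linearized, so that Lemmas \ref{lem:need_l8er} and \ref{lem:computer} both apply.

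First I would rewrite the logarithmic contribution using the identity
\begin{equation*}
    \log\!\left(1 - \frac{1}{p}\right) = -\frac{1}{p} + \left\{\frac{1}{p} + \log\!\left(1 - \frac{1}{p}\right)\right\},
\end{equation*}
which yields the clean decomposition
\begin{equation*}
    \mathcal{B}(q) = \Bigg(\sum_{\substack{p>q\\p\equiv\ell\,(q)}} \frac{1}{p} - \frac{1}{\varphi(q)}\sum_{p>q} \frac{1}{p}\Bigg) + \frac{1}{\varphi(q)}\sum_{p>q} \left\{\frac{1}{p} + \log\!\left(1 - \frac{1}{p}\right)\right\}.
\end{equation*}
The first parenthesis is exactly the quantity bounded in Lemma \ref{lem:computer}, so it contributes at most $\frac{\log q}{\sqrt q}\sum_{i=1}^{6} f_i(q)$ when $q\geq 3$, or $\sum_{i=1}^{4} f_i(q)$ when $q\geq 599$.

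Next I would handle the residual logarithmic tail. Expanding the Mercator series gives
\begin{equation*}
    \left|\frac{1}{p} + \log\!\left(1 - \frac{1}{p}\right)\right| = \sum_{k\geq 2}\frac{1}{kp^k} \leq \frac{1}{2}\sum_{k\geq 2}\frac{1}{p^k} = \frac{1}{2p(p-1)},
\end{equation*}
so the residual is bounded above by $\frac{1}{2\varphi(q)}\sum_{p>q}\frac{1}{p(p-1)}$. Feeding this into Lemma \ref{lem:need_l8er} with $x=q$ produces
\begin{equation*}
    \frac{1}{2\varphi(q)}\sum_{p>q}\frac{1}{p(p-1)} < \frac{1}{\varphi(q)(q-1)}\left(B + \frac{1}{2\log q} + \frac{1}{(\log q)^2}\right) = \frac{\tau_1(q)}{\varphi(q)(q-1)},
\end{equation*}
which is exactly the first term appearing in the statement.

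Combining the two bounds by the triangle inequality gives the claimed inequality, with the $q\geq 3$ version using $\sum_{i=1}^{6} f_i(q)$ and the $q\geq 599$ version using $\sum_{i=1}^{4} f_i(q)$. No step here is deep: the only non-mechanical insight is the rewriting that converts the logarithm into a linear term plus a rapidly convergent remainder, so that the quasi-orthogonality supplied by the explicit PNT in Lemma \ref{lem:computer} can be invoked. The mildly delicate part is simply bookkeeping to identify the factor $\frac{1}{2}\cdot 2B = B$ (and the companion $\frac{1}{2\log q},\frac{1}{(\log q)^2}$ terms) so that the constant $\tau_1(q)$ matches its definition in Theorem \ref{thm:explicit_fmla_constant}.
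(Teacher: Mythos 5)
Your proposal is correct and follows essentially the same route as the paper: expand $\log(1-1/p)$ so that $\mathcal{B}(q)$ splits into the difference $\sum_{p>q,\,p\equiv\ell}\tfrac1p-\tfrac{1}{\varphi(q)}\sum_{p>q}\tfrac1p$ plus a remainder bounded by $\tfrac{1}{2\varphi(q)}\sum_{p>q}\tfrac{1}{p(p-1)}$, then invoke Lemma \ref{lem:computer} for the former and Lemma \ref{lem:need_l8er} (halved, giving $\tau_1(q)$) for the latter. The bookkeeping matches the paper's constants exactly, so nothing further is needed.
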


\begin{proof} 
\red{Note that
\begin{align*}
    \frac{1}{\varphi(q)} \sum_{p > q} \log\left(1 - \frac{1}{p}\right)
    &= - \frac{1}{\varphi(q)} \sum_{p > q} \sum_{m\geq 1} \frac{1}{mp^m} \\
    &= - \frac{1}{\varphi(q)} \left(\sum_{p > q} \frac{1}{p} + O^*\left(\frac{1}{2} \sum_{p > q} \sum_{m\geq 2} \frac{1}{p^m} \right)\right) \\
    &= - \frac{1}{\varphi(q)} \left(\sum_{p > q} \frac{1}{p} + O^*\left(\frac{1}{2} \sum_{p > q} \frac{1}{p(p - 1)} \right)\right)
\end{align*}}
Therefore, Lemma \ref{lem:need_l8er} implies that if $q\geq 3$ and the GRH is true, then
\begin{align*}
    |\mathcal{B}(q)|
    &\leq \red{\Bigg|\sum_{\substack{p\,\equiv\,\ell\imod{q}\\p > q}} \frac{1}{p} - \frac{1}{\varphi(q)} \sum_{p > q} \frac{1}{p}\Bigg| + \frac{1}{2 \varphi(q)} \sum_{p > q} \frac{1}{p(p-1)}} \\
    &\leq \Bigg|\sum_{\substack{p\,\equiv\,\ell\imod{q}\\p > q}} \frac{1}{p} - \frac{1}{\varphi(q)} \sum_{p > q} \frac{1}{p} \Bigg| + \red{\left(B + \frac{1}{2\log{q}} + \frac{1}{(\log{q})^2}\right)} \frac{\varphi(q)^{-1}}{q-1} \\
    &= \Bigg|\sum_{\substack{p\,\equiv\,\ell\imod{q}\\p > q}} \frac{1}{p} - \frac{1}{\varphi(q)} \sum_{p > q} \frac{1}{p} \Bigg| + \tau_1(q) \frac{\varphi(q)^{-1}}{q-1} .
\end{align*}
Furthermore, Lemma \ref{lem:computer} implies the result.
\end{proof}

Insert Lemma \ref{lem:Aq} into \eqref{eqn:Step1} to see (unconditionally) that
\begin{equation}\label{eqn:Step2}
    \left|M(q,\ell) - \frac{\delta_{\ell}}{\ell} + \frac{\log\log{q}}{\varphi(q)} - \mathcal{B}(q) \right| \leq \frac{\varphi(q)^{-1}}{(\log{q})^2} ,
\end{equation}
so all that remains to describe $M(q,\ell)$ is to control the size of the contribution from $\mathcal{B}(q)$; this is an explicit version of \eqref{eqn:look_at_me}. To this end, insert Lemma \ref{lem:Bq} into \eqref{eqn:Step2} to see that if $q\geq 3$ and the GRH is true, then
\begin{equation*}
    \left| M(q,\ell) - \frac{\delta_{\ell}}{\ell} + \frac{\log\log{q}}{\varphi(q)}\right|
    \leq \tau_1(q) \frac{\varphi(q)^{-1}}{q-1} + \eta(q) \frac{\log{q}}{\sqrt{q}} ,
\end{equation*}
in which $\eta(q)$ is equal to
\begin{equation*}
    \begin{cases}
        \displaystyle \frac{15 + \frac{3}{\varphi(q)}}{8\pi} + \frac{6 + \frac{15}{4\pi} + \frac{3}{4\pi\varphi(q)}}{\log{q}} + \frac{22.29 + \frac{4.43 + \frac{1}{4\pi}}{\varphi(q)}}{(\log{q})^2} + \frac{20.58 + \frac{3}{4\pi\varphi(q)}}{(\log{q})^3} 
        &\text{if }q\geq 3,\\
        ~\\
        \displaystyle \frac{15 + \frac{3}{\varphi(q)}}{8\pi} + \frac{6 + \frac{15}{4\pi} + \frac{3}{4\pi\varphi(q)}}{\log{q}} + \frac{22.29 + \varphi(q)^{-1}}{(\log{q})^2} + \frac{20.58}{(\log{q})^3}
        &\text{if }q\geq 599 .
    \end{cases}
\end{equation*}
Using the lower bounds for $\varphi(q)$ from Lemma \ref{lem:varphi_lower_bounds}, we are able to replace any occurrence of $\varphi(q)$ with a lower bound that will enable us to replace $\eta(q)$ with a decreasing function in $q$. Now, of the bounds in Lemma \ref{lem:varphi_lower_bounds}, the last is the strongest for $n\leq 24\,924$, and the first is stronger otherwise. Therefore, we note that if $q\geq 10\,000$ and the GRH is true, then
\begin{equation*}
    \eta(q) < \frac{15 + \frac{3}{q^{2/3}}}{8\pi} + \frac{6 + \frac{15}{4\pi} + \frac{3}{4\pi q^{2/3}}}{\log{q}} + \frac{22.29 + q^{- 2/3}}{(\log{q})^2} + \frac{20.58}{(\log{q})^3} .
\end{equation*}
Next, computations show that the same upper bound also holds for all $q\geq 210$. Therefore, to capture a bound that holds for all $q\geq 3$, we alter the form of this upper bound and use computations to find that if $q\geq 3$ and the GRH is true, then
\begin{equation*}
    \eta(q) 
    < \frac{15 + \frac{84.89}{q^{2/3}}}{8\pi} + \frac{6 + \frac{15}{4\pi} + \frac{3}{4\pi q^{2/3}}}{\log{q}} + \frac{22.29 + q^{- 2/3}}{(\log{q})^2} + \frac{20.58}{(\log{q})^3}
    = \tau_2(q) .
\end{equation*}
To find the constant $84.89$, we incremented the original constant $3$ by $0.01$ until the constant was large enough to hold computationally for all $q\geq 3$. The result (Theorem \ref{thm:explicit_fmla_constant}) follows naturally.

\subsection{Proof of Corollary \ref{cor:Norton_explicit}}

It follows from Theorem \ref{thm:explicit_fmla_constant} and the computations in Table \ref{tab:comps} that if $q\geq 3$ and the GRH is true, then
\begin{equation*}
    \left|M(q,\ell) - \frac{\delta_{\ell}}{\ell}\right| 
    < \frac{\log\log{q}}{\varphi(q)} + \frac{\red{1.54515} \varphi(q)^{-1}}{q-1} + \frac{43.26016 \log{q}}{\sqrt{q}} .
\end{equation*}
It follows that 
\begin{equation}\label{eqn:for_checks}
    \left|M(q,\ell) - \frac{\delta_{\ell}}{\ell}\right| 
    < \left(43.26016 + \frac{\sqrt{q} \log\log{q}}{\varphi(q) \log{q}} + \frac{\red{1.54515}  \sqrt{q}}{\varphi(q) (q-1) \log{q}}\right) \frac{\log{q}}{\sqrt{q}} .
\end{equation}
Alternatively, insert the last bound from Lemma \ref{lem:varphi_lower_bounds} into the initial relationship to see that if $q\geq 31$, then 
\begin{align*}
    \left|M(q,\ell) - \frac{\delta_{\ell}}{\ell}\right| 
    &< \frac{\log\log{q}}{q^{2/3}} + \frac{\red{1.54515}}{q^{2/3}(q-1)}  + \frac{43.26016 \log{q}}{\sqrt{q}} \\ 
    &= \left(43.26016 + \frac{\log\log{q}}{q^{1/6}\log{q}} + \frac{\red{1.54515}}{q^{1/6}(q-1)\log{q}} \right)\frac{\log{q}}{\sqrt{q}} \\ 
    &< \frac{\red{43.47133} \log{q}}{\sqrt{q}} , 
\end{align*}
since the last coefficient of $\log{q}/\sqrt{q}$ decreases on $q\geq 9$. Finally, using computations, we can observe that the largest coefficient of $\log{q}/\sqrt{q}$ in \eqref{eqn:for_checks} such that $q\in\{3,4,\ldots,30\}$ is \red{$43.94332$}. The result follows naturally.

\subsection*{Acknowledgements} 
ESL thanks the Heilbronn Institute for Mathematical Research for their support. We also thank Joshua Stucky for valuable feedback and bringing \cite{Pomerance} to our attention. Finally, we thank Alessandro Languasco, Kenneth Williams, and an anonymous referee for a careful reading of earlier drafts. 

\end{document}